\theoremstyle{plain}
\newtheorem{lem}{Lemma}
\theoremstyle{plain}
\newtheorem{thm}{Theorem}
\theoremstyle{definition}
\newtheorem{rem}{Remark}
\theoremstyle{plain}
\newtheorem{col}{Corollary}
\renewcommand{\email}[2][]{%
	\ifx\emails\@empty\relax\else{\g@addto@macro\emails{,\space}}\fi%
	\@ifnotempty{#1}{\g@addto@macro\emails{#2\space}}%
	\g@addto@macro\emails{(\textrm{#1})}%
}
\begin{document}
\author[A.\,Yu.\,Ulitskaya \and O.\,L.\,Vinogradov]{A.\,Yu.\,Ulitskaya \and O.\,L.\,Vinogradov}
\address{Saint Petersburg State University,\\28, Universitetskii pr., Saint Petersburg, 198504, Russia}
\email[A.\,Yu.\,Ulitskaya]{baguadadao@gmail.com}
\email[O.\,L.\,Vinogradov]{olvin@math.spbu.ru}

\title[Optimal subspaces for mean square approximation]{Optimal subspaces for mean square approximation of classes of differentiable functions with boundary conditions}
\thanks{This work is supported by the Russian Science Foundation under grant No. 18-11-00055.}

\begin{abstract}
In this paper, we specify a set of optimal subspaces for $L_2$ approximation of three classes of functions in the Sobolev space $W^{(r)}_2$, defined on a segment and subject to certain boundary conditions. All of these subspaces are generated by equidistant shifts of a single function. In particular, we indicate optimal spline spaces of all degrees $d\geqslant r-1$ with uniform knots.
\end{abstract}
	
\keywords{Spaces of shifts, splines, $n$-widths}
\subjclass[2010]{41A30, 41A44, 41A15}
\maketitle

\section{Introduction}
\subsection{Notation}
In what follows, $\mathbb{C}$, $\mathbb{R}$, $\mathbb{Z}$, $\mathbb{Z}_+$, $\mathbb{N}$ are the sets of complex, real, integer, nonnegative integer, and natural numbers, respectively; $[a:b]=[a,b]\cap\mathbb{Z}$. Unless otherwise follows from the context, all the functional spaces under consideration can be real or complex.
If $p\in[1,+\infty)$, $L_p$ is the space of $2\pi$-periodic, measurable, and $p$-integrable functions; $L_p[a,b]$ is the space of measurable on $[a,b]$, $p$-integrable functions. The norms in these spaces are defined by
$$
\|f\|_p=\left(\int_{-\pi}^\pi|f|^p\right)^{1/p},\qquad
\|f\|_{L_p[a,b]}=\left(\int_a^b|f|^p\right)^{1/p},
$$
respectively.
Furthermore, $W^{(r)}_2[a,b]$ is the space of functions $f$ in $L_2[a,b]$ such that $f^{(r-1)}$ is absolutely continuous and $f^{(r)}\in L_2[a,b]$; the class $W^{(r)}_2$ of periodic functions is defined similarly. 

The symbol $\langle\cdot,\cdot\rangle_\mathcal{H}$ denotes the inner product in the Hilbert space~$\mathcal{H}$;
$$
E(f,\mathfrak{N})_p=\inf\limits_{T\in\mathfrak{N}}\|f-T\|_p
$$
is best approximation of $f$ in $L_p$ by the set $\mathfrak{N}\subset L_p$.  

The Fourier coefficients of the function~$f$ and the discrete Fourier transform of the finite sequence~$\{\beta_k\}_{k=0}^{2n-1}$
are defined by the equalities
$$c_k(f)=\frac{1}{2\pi}\int\limits_{-\pi}^\pi f(t)e^{-ikt}\,dt,\qquad
\widehat{\beta_l}=\sum\limits_{k=0}^{2n-1}\beta_ke^{-\frac{ilk\pi}{n}}.$$
The notation $f(x)\sim\sum\limits_{k\in\mathbb{Z}}c_ke^{ikx}$ means that the series on the right-hand side is the Fourier series of~$f$.

For $n\in\mathbb{N}$ and $\mu\in\mathbb{Z}_+$, let $\mathbf{S}_{n,\mu}$ 
denote the $2n$-dimensional space of $2\pi$-periodic splines of degree~$\mu$ and defect~$1$ with knots at the points $\dfrac{k\pi}{n}$, $k\in\mathbb{Z}$;
$\mathcal{T}_{2n-1}$ is the $(2n-1)$-dimensional space of trigonometric polynomials of degree at most $n-1$.

The symbols $f^e$ and $f^o$ denote the even and the odd parts of $f$, respectively, i.e.
$$
f^e=\frac{f+f(-\cdot)}{2},\qquad f^o=\frac{f-f(-\cdot)}{2}.
$$

Let $X$ be a normed linear space and $A$ a subset of $X$. The \emph{Kolmogorov $n$-width\/} of $A$ in $X$ is given by
$$
d_n(A;X)=\inf\limits_{X_n}\sup\limits_{x\in A}\inf\limits_{y\in X_n}\|x-y\|_X,
$$ 
where the external lower bound is taken over all subspaces $X_n$ of the space $X$, whose dimension does not exceed $n$.

\subsection{An overview of the results}
In~\cite{floater}, Floater and Sande studied the $L_2$ approximation of three classes of functions in $W^{(r)}_2[0,1]$, defined by certain boundary conditions. With slightly different scaling (which we hereafter adhere to) these classes are given by
\begin{align*}
H_0^r&=\{u\in W^{(r)}_2[0,\pi]\colon u^{(k)}(0)=u^{(k)}(\pi)=0,\quad 0\leqslant k<r,\quad k\text{ even}\},\\
H_1^r&=\{u\in W^{(r)}_2[0,\pi]\colon u^{(k)}(0)=u^{(k)}(\pi)=0,\quad 0\leqslant k<r,\quad k\text{ odd}\},\\
H_2^r&=\left\{u\in W^{(r)}_2\left[0,\frac{\pi}{2}\right]\colon u^{(k)}(0)=u^{(l)}\left(\frac{\pi}{2}\right)=0,\quad 0\leqslant k,l<r,\quad k\text{ even, }l\text{ odd}\right\}.
\end{align*}
The authors considered the function classes
\begin{align*}
A^r_i&=\{u\in H^r_i\colon \|u^{(r)}\|_{L_2[0,\pi]}\leqslant1\},\quad i=0,1,\\
A^r_2&=\left\{u\in H^r_2\colon \|u^{(r)}\|_{L_2\left[0,\frac{\pi}{2}\right]}\leqslant1\right\}
\end{align*}
and described the $n$-widths and certain optimal subspaces for $A^r_i$. Specifically, they showed that
$$
d_n(A^r_0)=\frac{1}{(n+1)^r},\quad d_n(A^r_1)=\frac{1}{n^r},\quad d_n(A^r_2)=\frac{1}{(2n+1)^r},
$$
and the spaces
\begin{equation}
\label{1.1}
\mathrm{span}\,\{x\mapsto\sin kx\}_{k=1}^n,\quad
\mathrm{span}\,\{x\mapsto\cos kx\}_{k=0}^{n-1},\quad
\mathrm{span}\,\{x\mapsto\sin (2k-1)x\}_{k=1}^n
\end{equation}
are optimal $n$-dimensional spaces for, respectively, $A^r_0$, $A^r_1$, and $A^r_2$. The result for $A^1_1$ was proved by 
Kolmogorov~\cite{kolm}.
In addition, the authors proved that the spaces $A^r_i$ admit optimal spline subspaces, which are defined as follows.

Let $\tau=(\tau_1,\ldots,\tau_m)$ be a knot vector such that $0<\tau_1<\ldots<\tau_m<P_i$, where $P_0=P_1=\pi$ and $P_2=\pi/2$.
Denote by $S_{d,\tau,i}$ the space of splines of degree~$d$ and defect~$1$ on $[0,P_i]$ and consider its $n$-dimensional subspaces defined by
\begin{align*}
S_{d,0}&=\{s\in S_{d,\tau_0,0}\colon s^{(k)}(0)=s^{(k)}(\pi)=0,\quad 0\leqslant k\leqslant d,\quad k\text{ even}\},\\
S_{d,1}&=\{s\in S_{d,\tau_1,1}\colon s^{(k)}(0)=s^{(k)}(\pi)=0,\quad 0\leqslant k\leqslant d,\quad k\text{ odd}\},\\
S_{d,2}&=\left\{s\in S_{d,\tau_2,2}\colon s^{(k)}(0)=s^{(l)}\left(\frac{\pi}{2}\right)=0,\quad 0\leqslant k,l\leqslant d,\quad k\text{ even, }l\text{ odd}\right\},
\end{align*}
where the knot vectors $\tau_i$ for $i=0,1,2$ are given as
\begin{align*}
\tau_0&=\begin{cases}
\left\{\frac{k\pi}{n+1}\right\}_{k=1}^n,&\quad d\text{ odd},\\
\left\{\frac{k\pi}{n+1}+\frac{\pi}{2(n+1)}\right\}_{k=0}^n,&\quad d\text{ even}, 
\end{cases}\\
\tau_1&=\begin{cases}
\left\{\frac{k\pi}{n}+\frac{\pi}{2n}\right\}_{k=0}^{n-1},&\quad d\text{ odd},\\
\left\{\frac{k\pi}{n}\right\}_{k=1}^{n-1},&\quad d\text{ even}, 
\end{cases}\\
\tau_2&=\begin{cases}
\left\{\frac{k\pi}{2n+1}+\frac{\pi}{2(2n+1)}\right\}_{k=0}^{n-1},&\quad d\text{ even},\\
\left\{\frac{k\pi}{2n+1}\right\}_{k=1}^n,&\quad d\text{ odd}. 
\end{cases}
\end{align*}
It was proved in~\cite{floater} that for any $d\geqslant r-1$ the spline spaces $S_{d,i}$ are optimal $n$-dimensional spaces for the set $A^r_i$, $i=0,1,2$.

As we can see, all the spaces $S_{d,i}$ have equidistant knots, but the form of the knot vector is determined by the degree~$d$. In this paper, we show that the classes $A^r_i$ admit optimal spline subspaces with both types of knots indicated in the definition of $\tau_i$ independently of the degree. 
Of course, boundary conditions should be relaxed or some extra conditions should be added in the remaining cases to ensure the dimension to equal~$n$.

Our technique consists in the following.
In~\cite{we}, we studied the periodic case and found a wide set of optimal subspaces generated by shifts of a single function, including spline spaces. 
The conditions of optimality in the periodic case were formulated in terms of Fourier coefficients of a function mentioned.
Now we reduce the problems for functions defined on a segment to similar problems for periodic functions and, 
using the results of~\cite{we}, specify a set of optimal subspaces in the nonperiodic situation.

\section{Spaces of shifts}

For $n\in\mathbb{N}$ and $B\in L_1$, let $\mathbb{S}_{B,n}$ be the space of functions $s$ defined on $\mathbb{R}$ and representable in the form
\begin{equation}
\label{2.1}
s(x)=\sum\limits_{j=0}^{2n-1}\beta_jB\left(x-\frac{j\pi}{n}\right),
\end{equation}
and $\mathbb{S}^\times_{B,n}$ be the space of functions in $\mathbb{S}_{B,n}$ that can be represented as~\eqref{2.1} with the additional condition 
\begin{equation}
\label{2.2}
\sum\limits_{j=0}^{2n-1}(-1)^j\beta_j=0.
\end{equation}
Substituting the Fourier series expansion of $B$ into~\eqref{2.1}, we obtain
$$
s(x)\sim\sum\limits_{j=0}^{2n-1}\beta_j\sum\limits_{l\in\mathbb{Z}}c_l(B)e^{il\left(x-\frac{j\pi}{n}\right)}=\sum\limits_{l\in\mathbb{Z}}c_l(B)\widehat{\beta}_le^{ilx}\sim
\sum\limits_{l=0}^{2n-1}\widehat{\beta}_l\Phi_{B,l}(x),
$$
where
$$
\Phi_{B,l}(x)=\Phi_{B,n,l}(x)=\frac{1}{2n}\sum\limits_{j=0}^{2n-1}e^{\frac{ilj\pi}{n}}B\left(x-\frac{j\pi}{n}\right)\sim\sum\limits_{\nu\in\mathbb{Z}}c_{l+2n\nu}(B)e^{i(l+2n\nu)x}.
$$
Clearly, $\Phi_{B,l}=\Phi_{B,l+2n}$ and condition~\eqref{2.2} is equivalent to $\widehat{\beta}_n=0$. Thus, the spaces $\mathbb{S}_{B,n}$ and $\mathbb{S}^\times_{B,n}$ coincide with
linear spans of the sets $\{\Phi_{B,l}\}_{l=0}^{2n-1}$ (or, equivalently, $\{\Phi_{B,l}\}_{l=1-n}^{n}$) and $\{\Phi_{B,l}\}_{l=1-n}^{n-1}$.
For $m\in[1:n]$, denote by $\mathbb{S}^\times_{B,n,m}$ the linear span of the set $\{\Phi_{B,l}\}_{l=1-m}^{m-1}$.

Note that the functions $\Phi_{B,l}$ are orthogonal: $\langle\Phi_{B,l},\Phi_{B,j}\rangle_{L_2}=0$ for $l\ne j$ and
$$\frac{1}{2\pi}\|\Phi_{B,l}\|_2^2=D_{B,l}=D_{B,n,l}=\sum\limits_{\nu\in\mathbb{Z}}|c_{l+2n\nu}(B)|^2.$$
The linear independence of the sets $\{B\left(\cdot-\frac{j\pi}{n}\right)\}_{j=0}^{2n-1}$ and $\{B\left(\cdot-\frac{j\pi}{n}\right)\}_{j=1-n}^{n-1}$ is equivalent to the fact that the functions $\Phi_{B,l}$ are nonzero for $l\in[1-n:n]$ and $l\in[1-n:n-1]$, respectively. In this case, the systems $\{\Phi_{B,l}\}_{l=1-n}^{n}$ and $\{\Phi_{B,l}\}_{l=1-n}^{n-1}$ form orthogonal bases in the spaces $\mathbb{S}_{B,n}$ and $\mathbb{S}^\times_{B,n}$. Orthonormal bases are constituted by the functions $\frac{1}{\sqrt{2\pi D_{B,l}}}\Phi_{B,l}$.

When $B$ is the Dirichlet kernel 
$$D_{n-1}(t)=\sum\limits_{k=1-n}^{n-1}e^{ikt},$$
we have $\mathbb{S}_{B,n}=\mathbb{S}^\times_{B,n}=\mathcal{T}_{2n-1}$,
$\mathbb{S}^\times_{B,n,m}=\mathcal{T}_{2m-1}$, $\Phi_{B,n}=0$, and for $|l|<n$, the functions $\Phi_{B,l}$ are ordinary exponents.

If $B$ is the $B$-spline
$$B_{n,\mu}(t)=\sum\limits_{k\in\mathbb{Z}}\left(\frac{e^{i\frac{\pi}{n}k}-1}{i\frac{\pi}{n}k}\right)^{\mu+1}e^{ikt},\quad \mu\in\mathbb{Z}_+$$
(henceforth, the fraction is assumed to equal $1$ whenever $k=0$), we find that $\mathbb{S}_{B,n}$ is the space of splines $\mathbf{S}_{n,\mu}$. The functions 
$$
\Phi_{B_{n,\mu},l}(x)=
\left(\frac{e^{i\frac{\pi}{n}l}-1}{i\frac{\pi}{n}}\right)^{\mu+1}\sum\limits_{\nu\in\mathbb{Z}}\frac{e^{i(l+2n\nu)x}}{(l+2n\nu)^{\mu+1}},
$$
which form an orthogonal basis in this space, are called \emph{exponential splines} (by convention, we have $\Phi_{B_{n,\mu},0}(x)=1$). The linear span of the system $\{\Phi_{B_{n,\mu},l}\}_{l=1-n}^{n-1}$ is denoted by $\mathbf{S}^\times_{n,\mu}$.

In more general nonperiodic situation exponential splines were introduced by Schoenberg; the basics of the theory and historical remarks can be found in~\cite{schnb}. The orthogonality of periodic exponential splines was noted by many authors; apparently, the earliest studies on this topic were~\cite{golomb,kamada}. The spaces $\mathbf{S}^\times_{n,\mu}$ and $\mathbb{S}^\times_{B,n}$ were considered by Vinogradov~\cite{ol1,ol2}.

The following lemma describes symmetry properties of spaces of shifts in terms of Fourier coefficients.

\begin{lem}
Let $n,m\in\mathbb{N}$, $m\leqslant n$, and $B\in L_1$. Then the following statements are equivalent.
\begin{enumerate}
	\item The inclusion $s\in\mathbb{S}^\times_{B,n,m}$ implies $s(-\cdot)\in\mathbb{S}^\times_{B,n,m}$.
	\item For any $l\in[0:m-1]$ there exists $\gamma_l\in\mathbb{C}\setminus\{0\}$ such that $\gamma_0\in\{-1,1\}$ and  
	\begin{equation}
	\label{2.22}
	c_{-l-2nk}(B)=\gamma_lc_{l+2nk}(B)\quad \text{for all $k\in\mathbb{Z}$.}
	\end{equation} 
	Moreover, if $\gamma_0=-1$, then $c_0(B)=0$.  
\end{enumerate}	
\end{lem}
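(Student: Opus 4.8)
The plan is to exploit the orthogonal decomposition of $\mathbb{S}^\times_{B,n,m}$ into the one-dimensional ``frequency classes'' spanned by the individual $\Phi_{B,l}$, and to observe that the reflection $x\mapsto-x$ acts cleanly on these building blocks. Starting from the Fourier series $\Phi_{B,l}(x)\sim\sum_\nu c_{l+2n\nu}(B)\,e^{i(l+2n\nu)x}$, I would first record that $\Phi_{B,l}(-\cdot)\sim\sum_\nu c_{l+2n\nu}(B)\,e^{-i(l+2n\nu)x}$, so the spectrum of $\Phi_{B,l}(-\cdot)$ lies entirely in the residue class $-l\pmod{2n}$. Since $m-1<n$, the indices $1-m,\dots,m-1$ are pairwise incongruent modulo $2n$, hence the $\Phi_{B,j}$, $j\in[1-m:m-1]$, have pairwise disjoint spectra. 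Projecting onto a single class then shows that \emph{any} element of $\mathbb{S}^\times_{B,n,m}$ whose spectrum sits in the class $-l$ is a scalar multiple of $\Phi_{B,-l}$ (note $-l\in[1-m:m-1]$ whenever $l\in[0:m-1]$).

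For (1)$\Rightarrow$(2), fix $l\in[0:m-1]$. Applying (1) to $s=\Phi_{B,l}$ gives $\Phi_{B,l}(-\cdot)\in\mathbb{S}^\times_{B,n,m}$, so by the spectral remark $\Phi_{B,l}(-\cdot)=\lambda_l\Phi_{B,-l}$ for some scalar $\lambda_l$. Matching coefficients at each frequency $-l-2nk$ turns this into $c_{l+2nk}(B)=\lambda_l c_{-l-2nk}(B)$ for all $k$, which, once $\lambda_l\ne0$, is exactly~\eqref{2.22} with $\gamma_l=\lambda_l^{-1}$. To see $\lambda_l$ may be taken nonzero: if $\Phi_{B,l}\ne0$ then $\Phi_{B,l}(-\cdot)\ne0$, forcing $\lambda_l\ne0$; and if $\Phi_{B,l}=0$, then feeding $\Phi_{B,-l}$ through (1) symmetrically forces $\Phi_{B,-l}=0$ too, so both sides of~\eqref{2.22} vanish and I may simply set $\gamma_l=1$. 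The class $l=0$ needs separate care: here $\Phi_{B,0}(-\cdot)=\lambda_0\Phi_{B,0}$, and reflecting a second time yields $\lambda_0^2=1$ on the (nonzero) spectrum, so $\gamma_0=\lambda_0\in\{-1,1\}$; moreover taking $k=0$ in $c_0(B)=\gamma_0 c_0(B)$ shows that $\gamma_0=-1$ implies $c_0(B)=0$, which is the final clause.

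For (2)$\Rightarrow$(1), reading~\eqref{2.22} backwards gives $\Phi_{B,l}(-\cdot)=\gamma_l^{-1}\Phi_{B,-l}$ for $l\in[0:m-1]$, and the substitution $x\mapsto-x$ then yields $\Phi_{B,-l}(-\cdot)=\gamma_l\Phi_{B,l}$, covering the negative indices as well. Thus every basis function $\Phi_{B,l}$, $l\in[1-m:m-1]$, reflects to a scalar multiple of a basis function, so $\mathbb{S}^\times_{B,n,m}$ is invariant under $x\mapsto-x$ by linearity.

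The two coefficient-matching computations are routine; the delicate points, which I expect to be the main obstacle, are the degenerate classes where $\Phi_{B,l}$ vanishes—where one must verify that (1) forces $\Phi_{B,-l}$ to vanish as well so that a nonzero $\gamma_l$ can still be chosen consistently with~\eqref{2.22}—and the self-paired class $l=0$, where the reflection is an involution that pins $\gamma_0$ down to $\pm1$ and ties the value $\gamma_0=-1$ to the vanishing of $c_0(B)$.
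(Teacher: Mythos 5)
Your proposal is correct and follows essentially the same route as the paper: both isolate $\Phi_{B,l}(-\cdot)$ as a scalar multiple of $\Phi_{B,-l}$ (you via disjointness of the spectra modulo $2n$, the paper via the equivalent orthogonality of the $\Phi_{B,j}$), then match Fourier coefficients, treat the degenerate indices where $\Phi_{B,l}$ vanishes so that a nonzero $\gamma_l$ can still be chosen, and use the involution at $l=0$ to force $\gamma_0=\pm1$ and $c_0(B)=0$ when $\gamma_0=-1$.
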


\begin{proof}
The first statement means that $\Phi_{B,l}(-\cdot)\in\mathbb{S}^\times_{B,n,m}$ for all $l\in[1-m:m-1]$.
Replacing $l$ with $-l$ for convenience, rewrite the inclusion as
$$\Phi_{B,-l}(-x)=\sum\limits_{j=1-m}^{m-1}\gamma_j\Phi_{B,j}(x)$$
for some $\gamma_j$.
Since $\Phi_{B,j}$ is orthogonal to $\Phi_{B,-l}(-\cdot)$ for $j\ne l$, we have
$$\Phi_{B,-l}(-x)=\gamma_{l}\Phi_{B,l}(x).$$
Equating the Fourier coefficients, we get~\eqref{2.22}.
Replacing $l$ with $-l$ and $k$ with $-k$, we also have $c_{l+2nk}(B)=\gamma_{-l}c_{-l-2nk}(B)$.
If $\gamma_l=0$ for some $l$, then $c_{l+2nk}(B)=c_{-l-2nk}(B)=0$ and the equalities are trivially satisfied with an arbitrary $\gamma_l$. 
So, we can take $\gamma_l\ne0$.

On the other hand, if \eqref{2.22} is valid for a number $l$ with $\gamma_l\ne0$, it is also valid for a number $-l$ with $\frac{1}{\gamma_l}$. 
So, it is sufficient to consider $l\in[\,0:m-1]$.

Putting $l=0$, we conclude that $c_{-2nk}(B)=\gamma_0c_{2nk}(B)$ for all~$k$. Replacing $k$ by $-k$, we also have $c_{2nk}(B)=\gamma_0c_{-2nk}(B)$.
If $c_{2nk}(B)=0$ for all $k$, we can take $\gamma_0=1$. If $c_{2nk}(B)\ne0$ for some $k$, then $c_{-2nk}(B)\ne0$ for the same $k$, and so
$\gamma_0=\pm1$. Obviously, $\gamma_0=-1$ implies $c_0(B)=0$.
\end{proof}

Note that all even functions (and, in particular, the Dirichlet kernel) satisfy the second condition of Lemma~1 with $\gamma_l=1$ for all $l$.

For the $B$-spline, we have $\gamma_l=e^{-i\frac{\pi}{n}l(\mu+1)}$, and for the shifted $B$-spline
$\widetilde{B}_{n,\mu}={B}_{n,\mu}\bigl(\cdot-\frac{\pi}{2n}\bigr)$, the identity 
$c_k(\widetilde{B}_{n,\mu})=e^{-\frac{ik\pi}{2n}}c_k(B_{n,\mu})$ yields $\gamma_l=e^{-\frac{il\pi\mu}{n}}$.

\begin{rem}
For $l\in[1:m-1]$, we have $\Phi^e_{B,-l}=\gamma_l\Phi^e_{B,l}$ and $\Phi^o_{B,-l}=-\gamma_l\Phi^o_{B,l}$. Therefore, the space $\mathbb{S}^\times_{B,n,m}$ can be represented as
\begin{equation}
\label{2.3}
\mathbb{S}^\times_{B,n,m}=\mathrm{span}\,\{\Phi_{B,0}\}\oplus\mathrm{span}\,\{\Phi^e_{B,l}\}_{l=1}^{m-1}\oplus\mathrm{span}\,\{\Phi^o_{B,l}\}_{l=1}^{m-1}.
\end{equation}
\end{rem}

\begin{rem}
If $s\in\mathbb{S}^\times_{B,n,m}$, then $s(\cdot+\pi)\in\mathbb{S}^\times_{B,n,m}$ and, since $\Phi_{B,l}(x+\pi)=(-1)^l\Phi_{B,l}(x)$, we have
$$
\Phi^e_{B,l}(\pi-x)=(-1)^l\Phi^e_{B,l}(x),\quad \Phi^o_{B,l}(\pi-x)=(-1)^{l+1}\Phi^o_{B,l}(x).
$$
\end{rem}

\section{Main results}

The following theorem was established in \cite[Theorem 1]{we}.

\begin{thm}
Let $r,n,m\in\mathbb{N}$, $m\leqslant n$, and $B\in L_2$. Then the following statements are equivalent.
\begin{enumerate}
	\item For any function $f\in W^{(r)}_2$, the following inequality holds:
	\begin{equation}
	\label{222}
	E\bigl(f,\mathbb{S}^\times_{B,n,m}\bigr)_2\leqslant\frac{1}{m^r}\|f^{(r)}\|_2.
	\end{equation} 
 	\item The Fourier coefficients of $B$ satisfy the conditions
 	\begin{align}
 	c_l(B)\ne0 &\quad\text{for all }\ |l|\in[\,0:m-1], \cr
 	c_{2n\nu}(B)=0 &\quad\text{for all }\ \nu\in\mathbb{Z}\setminus\{0\},\cr
 	\sum\limits_{k\in\mathbb{Z}}\frac{|c_{l+2nk}(B)|^2}{\frac{1}{(l+2nk)^{2r}}-\frac{1}{m^{2r}}}\geqslant0
 	&\quad\text{for all }\ |l|\in[1:m-1].\label{3.1}
 	\end{align}
\end{enumerate}
\end{thm}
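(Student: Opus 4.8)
The plan is to pass to the Fourier side and exploit the orthogonal decomposition of $\mathbb{S}^\times_{B,n,m}$ into the lines spanned by the $\Phi_{B,l}$. Writing $f\sim\sum_k c_k(f)e^{ikx}$ and grouping frequencies into residue classes modulo $2n$, I note that each $\Phi_{B,l}$ is supported (in frequency) on the single class $\{l+2nk\colon k\in\mathbb{Z}\}$, and distinct $l\in[1-m:m-1]$ sit in distinct classes. Since the $\Phi_{B,l}$ are pairwise orthogonal, the best approximation splits across classes: in each class $l\in[1-m:m-1]$ one projects the sequence $(c_{l+2nk}(f))_k$ onto the line through $(c_{l+2nk}(B))_k$, while in the remaining classes nothing is subtracted. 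By Parseval, both $E(f,\mathbb{S}^\times_{B,n,m})_2^2$ and $\frac1{m^{2r}}\|f^{(r)}\|_2^2$ decompose as sums over residue classes, so statement (1) is equivalent to a single scalar inequality holding in each class for all admissible coefficient sequences; test functions with frequencies in one class show this splitting is lossless in both directions.

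For the classes with $m\leqslant|l|\leqslant n$ the inequality reads $\sum_k|c_{l+2nk}(f)|^2\leqslant\frac1{m^{2r}}\sum_k(l+2nk)^{2r}|c_{l+2nk}(f)|^2$, which holds automatically because $|l+2nk|\geqslant m$ for every $k$ once $m\leqslant|l|\leqslant n$; these classes impose no condition on $B$. The heart of the matter is a class $l\in[1-m:m-1]$. Abbreviating $x_k=c_{l+2nk}(f)$, $b_k=c_{l+2nk}(B)$ and $w_k=1-(l+2nk)^{2r}/m^{2r}$, the class inequality becomes
\[
\sum_k w_k|x_k|^2\leqslant\frac{\bigl|\sum_k x_k\overline{b_k}\bigr|^2}{\sum_k|b_k|^2}\qquad\text{for all }(x_k).
\]
Here $w_0>0$ while $w_k<0$ for $k\neq0$ (then $|l+2nk|>m$), so the left-hand form is indefinite with a single positive direction and the right-hand side is a rank-one nonnegative form. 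I would establish the equivalence by holding $x_0$ fixed and minimizing the difference of the two sides over the remaining coordinates $x_k$, $k\neq0$; this is a convex problem (the weights $-w_k$ are positive), and requiring the minimum to be nonnegative collapses to the two scalar conditions $c_l(B)=b_0\neq0$ and $\sum_k|b_k|^2(1-w_k)/w_k\geqslant0$, the second of which coincides with \eqref{3.1} up to the positive factor $m^{2r}$. For $l=0$ the weight $w_0=1$ makes the $k=0$ term drop out, and since the remaining coefficients $(1-w_k)/w_k$ are all negative, the condition forces $c_{2nk}(B)=0$ for $k\neq0$, giving the second requirement of (2).

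The main obstacle is precisely this per-class step: the diagonal form $\mathrm{diag}(w_k)$ is indefinite, so neither Cauchy--Schwarz nor a crude eigenvalue estimate applies directly, and the rank-one term must be balanced against the single positive weight. Carrying out the constrained minimization and showing that its value is nonnegative if and only if the scalar condition holds is the technical core; one must also check that $c_l(B)\neq0$ is genuinely necessary (otherwise the minimizing value vanishes while $w_0>0$, so the inequality fails), and that the optimizing sequence is admissible, i.e.\ $\sum_k(l+2nk)^{2r}|x_k|^2<\infty$, which holds because $w_k\to-\infty$ renders the tail summable. Both implications then follow: necessity of (2) by feeding into (1) functions whose frequencies lie in a single residue class, and sufficiency by reassembling the per-class inequalities through the Parseval decomposition.
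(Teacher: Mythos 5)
The paper does not actually prove this statement: it is Theorem~1, imported verbatim from reference~[8] (``established in \cite[Theorem 1]{we}''), so there is no in-paper proof to compare against. Your proposal is the natural self-contained route, and it is correct. The residue-class decomposition is valid (the $\Phi_{B,l}$ have disjoint frequency supports, distinct $l\in[1-m:m-1]$ give distinct classes since $m\leqslant n$, and functions supported on a single class make the splitting lossless in the necessity direction), the classes with $m\leqslant|l|\leqslant n$ indeed impose nothing because $|l+2nk|\geqslant m$ there, and the per-class reduction is right. I checked the step you flag as the technical core: with $x_0$ fixed, the convex coercive functional $\frac{|\langle x,b\rangle|^2}{\|b\|^2}+\sum_{k\ne0}(-w_k)|x_k|^2$ attains its minimum at $x_j=\frac{Sb_j}{w_j\|b\|^2}$, giving the value $\frac{|b_0|^2|x_0|^2}{\|b\|^2-\sigma}$ with $\sigma=\sum_{j\ne0}|b_j|^2/w_j\leqslant0$; requiring this to dominate $w_0|x_0|^2$ rearranges exactly to $\sum_k|b_k|^2(1-w_k)/w_k\geqslant0$, i.e.\ to \eqref{3.1} times $m^{-2r}$, and the minimizer satisfies $\sum_k(l+2nk)^{2r}|x_k|^2<\infty$ so it is realized by an admissible $f$. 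Your treatment of the degenerate cases is also right: $b_0=0$ makes the minimum vanish while $w_0>0$, forcing $c_l(B)\ne0$; and for $l=0$ the weight $w_0=1$ kills the $k=0$ term so nonnegativity forces $c_{2n\nu}(B)=0$ for $\nu\ne0$. Writing it up would only require executing this minimization explicitly.
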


We need the following simple corollary of Theorem~1.

\begin{col}
Let $r,n,m\in\mathbb{N}$, $m\leqslant n$, and $B\in L_2$. Then the following statements are equivalent.
\begin{enumerate}
	\item For any function $f\in W^{(r)}_2$ such that $c_0(f)=0$, the following inequality holds:
	$$E\bigl(f,\mathbb{S}^\times_{B,n,m}\bigr)_2\leqslant\frac{1}{m^r}\|f^{(r)}\|_2.$$ 
	\item For all $|l|\in[1:m-1]$, we have $c_l(B)\ne0$ and
	$$\sum\limits_{k\in\mathbb{Z}}\frac{|c_{l+2nk}(B)|^2}{\frac{1}{(l+2nk)^{2r}}-\frac{1}{m^{2r}}}\geqslant0.$$
\end{enumerate}
\end{col}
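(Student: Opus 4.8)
The plan is to deduce the corollary from Theorem~1 applied to a modified kernel that coincides with $B$ away from the residue class $0\pmod{2n}$ but is a pure constant on that class. Concretely, I would set $\tilde B\in L_2$ by
$$
c_k(\tilde B)=c_k(B)\ (k\not\equiv 0\ (\mathrm{mod}\ 2n)),\qquad c_0(\tilde B)=1,\qquad c_{2n\nu}(\tilde B)=0\ (\nu\ne0).
$$
By construction $\tilde B$ satisfies the two conditions $c_0(\tilde B)\ne0$ and $c_{2n\nu}(\tilde B)=0$ $(\nu\ne0)$ of Theorem~1(2); moreover, for every $|l|\in[1:m-1]$ one has $l\not\equiv0\pmod{2n}$ (since $m\le n$), so $c_{l+2nk}(\tilde B)=c_{l+2nk}(B)$ for all $k$, and $c_l(\tilde B)\ne0\iff c_l(B)\ne0$. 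Hence the remaining part of Theorem~1(2) for $\tilde B$ reads exactly as statement~(2) of the corollary for $B$, i.e. $\mathrm{Thm\,1(2)}[\tilde B]\iff\mathrm{Cor\,(2)}[B]$. Since Theorem~1 gives $\mathrm{Thm\,1(1)}[\tilde B]\iff\mathrm{Thm\,1(2)}[\tilde B]$, it remains to prove that statement~(1) of the corollary for $B$ is equivalent to statement~(1) of Theorem~1 for $\tilde B$.

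For this I would use the orthogonal decomposition of $L_2$ into the subspaces $V_l$ spanned by $\{e^{i(l+2nk)x}\}_{k\in\mathbb{Z}}$, $l\in[1-n:n]$ (the residue classes modulo $2n$). Because $\Phi_{B,l}\in V_l$ and the $V_l$ are mutually orthogonal, for any $g$ both $E(g,\mathbb{S}^\times_{B,n,m})_2^2$ and $\|g^{(r)}\|_2^2=2\pi\sum_k k^{2r}|c_k(g)|^2$ split into sums over $l$, the space $\mathbb{S}^\times_{B,n,m}$ contributing the line $\mathrm{span}\{\Phi_{B,l}\}$ in each class $|l|\le m-1$ and nothing in the classes $|l|\ge m$; the same holds for $\tilde B$. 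Thus each of the two inequalities is the conjunction of its per-class versions. In the classes $|l|\ge m$ every frequency $k\equiv l$ satisfies $|k|\ge m$, so $k^{2r}/m^{2r}\ge1$ and the per-class inequality holds trivially for both kernels. For $l\ne0$, $|l|\le m-1$, we have $\Phi_{\tilde B,l}=\Phi_{B,l}$, so the corresponding per-class conditions for $B$ and $\tilde B$ are identical.

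The crux is the class $l=0$, where $\Phi_{\tilde B,0}\equiv1$ while $\Phi_{B,0}$ may be an arbitrary element of $V_0$. The key estimate is that every nonzero frequency in $V_0$ equals $2n\nu$ with $|2n\nu|\ge2n>m$. Hence for a function $g$ with $c_0(g)=0$ the component $g_0\in V_0$ is orthogonal to the constants and $\|g_0\|_2\le m^{-r}\|g_0^{(r)}\|_2$, so the class-$0$ inequality holds automatically whether $g_0$ is approximated by $\mathrm{span}\{1\}$ (error exactly $\|g_0\|_2$, the $\tilde B$ case) or by $\mathrm{span}\{\Phi_{B,0}\}$ (error no larger, the $B$ case). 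Now in statement~(1) of the corollary the admissible functions are precisely those with $c_0(f)=0$, whereas in Theorem~1(1) for $\tilde B$ one may first subtract $c_0(f)\cdot1\in\mathbb{S}^\times_{\tilde B,n,m}$ without changing $E(f,\mathbb{S}^\times_{\tilde B,n,m})_2$ or $f^{(r)}$, reducing to the same class of functions. Consequently both statements reduce to the identical family of per-class inequalities over $l\ne0$, $|l|\le m-1$, which yields $\mathrm{Cor\,(1)}[B]\iff\mathrm{Thm\,1(1)}[\tilde B]$. Chaining this with Theorem~1 and with the equivalence of the statements~(2) established in the first paragraph completes the proof.

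I expect the main obstacle to be the careful bookkeeping of the class $l=0$: one must verify that the restriction $c_0(f)=0$ in the corollary and the omission of the two ``diagonal'' conditions in its statement~(2) are exactly compensated by replacing $\Phi_{B,0}$ with the constant function, and in particular that no residual constraint on the coefficients $c_{2n\nu}(B)$, $\nu\ne0$, survives. This last point is precisely what the inequality $|2n\nu|\ge2n>m$ guarantees.
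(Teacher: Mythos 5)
Your proposal is correct and follows essentially the same route as the paper: the paper also introduces the modified kernel $\widetilde{B}$ with $c_0(\widetilde{B})=1$, $c_{2n\nu}(\widetilde{B})=0$ for $\nu\ne0$, and $c_k(\widetilde{B})=c_k(B)$ otherwise, splits $f$ into its component $f_0$ with frequencies in $2n\mathbb{Z}$ and the rest, observes that the $f_0$-part of the inequality is trivial because the relevant space contains constants, and then invokes Theorem~1. Your finer decomposition into all residue classes modulo $2n$ is just a more explicit version of the same orthogonality argument.
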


\begin{proof}
For $g\in  L_2$, put $g_0(x)=\sum\limits_{\nu\in\mathbb{Z}}c_{2n\nu}(g)e^{i2n\nu x}$.
Obviously, the inequality \eqref{222}
on the whole class $W^{(r)}_2$
is equivalent to the system
$$E\bigl(f_0,\mathbb{S}^\times_{B_0,n,m}\bigr)_2\leqslant\frac{1}{m^r}\|f_0^{(r)}\|_2,$$ 
$$E\bigl(f-f_0,\mathbb{S}^\times_{B-B_0,n,m}\bigr)_2\leqslant\frac{1}{m^r}\|(f-f_0)^{(r)}\|_2.$$ 

$(1)\implies(2)$.
Let $\widetilde{B}\in L_2$ be such a function that $c_0(\widetilde{B})=1$, $c_{2n\nu}(\widetilde{B})=0$ for all $\nu\in\mathbb{Z}\setminus\{0\}$, and
$c_k(\widetilde{B})=c_k(B)$ for $k\ne 2n\nu$. 
For any $f\in  W^{(r)}_2$ we have
\begin{align}
\label{333} 
E\Bigl(f_0,\mathbb{S}^\times_{\widetilde{B}_0,n,m}\Bigr)_2&\leqslant\|f_0-c_0(f)\|_2\leqslant\frac{1}{m^r}\|f_0^{(r)}\|_2,\\
\label{444} 
E\Bigl(f-f_0,\mathbb{S}^\times_{\widetilde{B}-\widetilde{B}_0,n,m}\Bigr)_2&\leqslant\frac{1}{m^r}\|(f-f_0)^{(r)}\|_2.
\end{align}
Here \eqref{333} is trivial because $\mathbb{S}^\times_{\widetilde{B}_0,n,m}$ contains constants, while \eqref{444} holds due to assumption. 
So, \eqref{222} is valid. 
By Theorem~1, the Fourier coefficients of $\widetilde{B}$ satisfy its second assertion, 
so the same holds for $c_k(B)$ when $k\ne2n\nu$.

$(2)\implies(1)$. Let $f\in  W^{(r)}_2$, $c_0(f)=0$. Define $\widetilde{B}$ as above. 
Then, by Theorem~1, \eqref{222} holds for~$\widetilde{B}$. By \eqref{333} and \eqref{444}, it also holds for~$B$.
\end{proof}

Consider the following functional classes:
\begin{align*}
\widetilde{H}_0^r&=\{u\in W^{(r)}_2\colon u\text{ is odd}\},\\
\widetilde{H}_1^r&=\{u\in W^{(r)}_2\colon u\text{ is even}\},\\
\widetilde{H}_2^r&=\left\{u\in W^{(r)}_2\colon u\text{ is odd, } u\left(\cdot+\frac{\pi}{2}\right)\text{ is even}\right\}.
\end{align*}
Evidently, every function $u\in\widetilde{H}^r_i$ belongs to $H^r_i$. Conversely, according to the boundary conditions in the definition of the classes $H_i^r$, the $2\pi$-periodization of the odd extension of $u\in H_0^r$ to the interval $[-\pi,0]$ belongs to $\widetilde{H}_0^r$. Similarly, the $2\pi$-periodization of the even extension of $u\in H_1^r$ to the interval $[-\pi,0]$ is in $\widetilde{H}_1^r$. Consecutively extending $u\in H_2^r$ to an even (with respect to $\pi/2$) function on $[0,\pi]$ and to an odd function on $[-\pi,\pi]$, after $2\pi$-periodization we get a function belonging to $\widetilde{H}_2^r$.\\

Therefore, putting
$$
\widetilde{A}^r_i=\{u\in \widetilde{H}^r_i\colon \|u^{(r)}\|_2\leqslant1\},\quad i=0,1,2,
$$
we derive
\begin{align*}
d_n(\widetilde{A}^r_0;L_2)&=d_n(A^r_0;L_2[0,\pi])=\frac{1}{(n+1)^r},\\
d_n(\widetilde{A}^r_1;L_2)&=d_n(A^r_1;L_2[0,\pi])=\frac{1}{n^r},\\
d_n(\widetilde{A}^r_2;L_2)&=d_n\left(A^r_2;L_2\left[0,\frac{\pi}{2}\right]\right)=\frac{1}{(2n+1)^r}.
\end{align*}
Thus, we can reduce problems for nonperiodic classes to those for periodic classes and apply Theorem~1.
We will formulate our results for periodic classes (denoted with tildes).

\begin{rem}
Let $\mathsf{S}$ be a closed subspace of $L_2$ such that the condition $s\in\mathsf{S}$ implies $s(-\cdot)\in\mathsf{S}$. Then an element of best approximation of any function $u\in\widetilde{H}^r_0$ in $L_2$ by the space $\mathsf{S}$ is odd. Indeed, if $\|u-s\|_2=\inf\limits_{T\in\mathsf{S}}\|f-T\|_2$, we can write
\begin{gather*}
\|u-s\|_2\leqslant\left\|u-\frac{s-s(-\cdot)}{2}\right\|_2=
\left\|\frac{u-s}{2}+\frac{u+s(-\cdot)}{2}\right\|_2=\\
=\left\|\frac{u-s}{2}+\frac{-u(-\cdot)+s(-\cdot)}{2}\right\|_2\leqslant
\frac{1}{2}\left(\|u-s\|_2+\|u(-\cdot)-s(-\cdot)\|_2\right)=\|u-s\|_2. 
\end{gather*}
This means that all the inequalities in this chain turn into equalities. In particular, we have $\|u-s\|_2=\|u-s^o\|_2$. By the uniqueness of an element of best approximation in $L_2$, we conclude that $s$ is odd.

For the same reason, an element of best approximation of any function $u\in\widetilde{H}^r_1$ by the space $\mathsf{S}$ is even. If, in addition, the space $\mathsf{S}$ is invariant under the shift by $\pi$, an element of best approximation of $u\in\widetilde{H}^r_2$ by the space $\mathsf{S}$ possesses the same symmetry properties as the function $u$ itself. 
\end{rem}

Consider the following $m$-dimensional spaces:
\begin{align*}
\widetilde{\mathcal{S}}^0_{B,n,m}&=\mathrm{span}\,\{\Phi^o_{B,l}\}_{l=1}^{m}\qquad \text{for }m+1\leqslant n,\\
\widetilde{\mathcal{S}}^1_{B,n,m}&=\mathrm{span}\,\{\Phi_{B,0}\}\oplus\mathrm{span}\,\{\Phi^e_{B,l}\}_{l=1}^{m-1}\qquad\text{for } m\leqslant n,\\
\widetilde{\mathcal{S}}^2_{B,n,m}&=\mathrm{span}\,\{\Phi^o_{B,2l-1}\}_{l=1}^m\qquad \text{for }2m+1\leqslant n.
\end{align*}

In the following three theorems we give sufficient conditions of extremality of these spaces.

\begin{thm}
Let $r,n,m\in\mathbb{N}$, $m+1\leqslant n$, and suppose that the Fourier coefficients of a function $B\in L_2$ satisfy the following conditions.
\begin{enumerate}
	\item For any $l\in[1:m]$ there exists $\gamma_l\in\mathbb{C}\setminus\{0\}$ such that for all $k\in\mathbb{Z}$ $c_{-l-2nk}(B)=\gamma_lc_{l+2nk}(B)$.
	\item For all $\nu\in\mathbb{N}$, $c_{2n\nu}(B)=c_{-2n\nu}(B)$.
	\item For all $l\in[1:m]$, we have $c_l(B)\ne0$ and
	$$\sum\limits_{k\in\mathbb{Z}}\frac{|c_{l+2nk}(B)|^2}{\frac{1}{(l+2nk)^{2r}}-\frac{1}{(m+1)^{2r}}}\geqslant0.$$
\end{enumerate}
Then for any $u\in\widetilde{H}_0^r$,
\begin{equation}
\label{3.2}
E\bigl(u,\widetilde{\mathcal{S}}^0_{B,n,m}\bigr)_2\leqslant \frac{1}{(m+1)^r}\|u^{(r)}\|_2.
\end{equation}
\end{thm}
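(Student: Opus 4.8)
The plan is to deduce \eqref{3.2} from Corollary~1 applied with $m+1$ in place of $m$, and then to cut the approximating space down from the full $\mathbb{S}^\times_{B,n,m+1}$ to its odd part $\widetilde{\mathcal{S}}^0_{B,n,m}$ by a symmetry argument. First I would check that hypotheses~(1) and~(3) together reproduce statement~(2) of Corollary~1 for the middle parameter $m+1$ (note $m+1\leqslant n$ is exactly the standing assumption). Indeed, condition~(3) is precisely the required inequality, together with $c_l(B)\ne0$, for $l\in[1:m]$; and condition~(1) transfers it to the negative indices, since replacing $k$ by $-k$ and using $c_{-l-2nk}(B)=\gamma_l c_{l+2nk}(B)$ turns the sum attached to $-l$ into $|\gamma_l|^2$ times the sum attached to $l$ (because $(-l-2nk)^{2r}=(l+2nk)^{2r}$), and likewise gives $c_{-l}(B)=\gamma_l c_l(B)\ne0$. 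Hence the full range $|l|\in[1:m]$ is covered, and by the implication $(2)\implies(1)$ of Corollary~1 every $f\in W^{(r)}_2$ with $c_0(f)=0$ satisfies
\[
E\bigl(f,\mathbb{S}^\times_{B,n,m+1}\bigr)_2\leqslant\frac{1}{(m+1)^r}\|f^{(r)}\|_2.
\]

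Next I would take $u\in\widetilde{H}^r_0$; since $u$ is odd, $c_0(u)=0$, so the displayed bound already applies with $f=u$. It then remains to replace $\mathbb{S}^\times_{B,n,m+1}$ by the smaller space $\widetilde{\mathcal{S}}^0_{B,n,m}$ at no cost. For this I would first observe that $\mathbb{S}^\times_{B,n,m+1}$ is invariant under $x\mapsto -x$: conditions~(1) and~(2) are exactly statement~(2) of Lemma~1 on the range $[0:m]$, with condition~(2) furnishing the case $l=0$ (so that $\gamma_0=1$), and Lemma~1 yields the symmetry. In particular $\Phi_{B,0}$ is even, and applying the decomposition~\eqref{2.3} with $m+1$ in place of $m$ gives
\[
\mathbb{S}^\times_{B,n,m+1}=\mathrm{span}\,\{\Phi_{B,0}\}\oplus\mathrm{span}\,\{\Phi^e_{B,l}\}_{l=1}^{m}\oplus\mathrm{span}\,\{\Phi^o_{B,l}\}_{l=1}^{m},
\]
in which the first two summands consist of even functions and the last summand is precisely $\widetilde{\mathcal{S}}^0_{B,n,m}$.

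Finally, by Remark~3 the element of best approximation $s$ of the odd function $u$ by the symmetric space $\mathbb{S}^\times_{B,n,m+1}$ is itself odd, so by the above decomposition its even component vanishes and $s\in\widetilde{\mathcal{S}}^0_{B,n,m}$. Consequently the best approximations by the two spaces coincide, and
\[
E\bigl(u,\widetilde{\mathcal{S}}^0_{B,n,m}\bigr)_2=\|u-s\|_2=E\bigl(u,\mathbb{S}^\times_{B,n,m+1}\bigr)_2\leqslant\frac{1}{(m+1)^r}\|u^{(r)}\|_2,
\]
which is \eqref{3.2}. The step that requires genuine care is this last reduction: a priori the optimal approximant lives in the $(2m+1)$-dimensional space $\mathbb{S}^\times_{B,n,m+1}$, and the entire argument hinges on Remark~3 forcing it into the $m$-dimensional odd subspace $\widetilde{\mathcal{S}}^0_{B,n,m}$ so that passing to the smaller space loses nothing. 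Everything else is bookkeeping that matches the three hypotheses to those of Corollary~1 and Lemma~1.
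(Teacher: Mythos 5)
Your proposal is correct and follows essentially the same route as the paper: apply Corollary~1 with parameter $m+1$ (using hypotheses (1) and (3) to cover negative indices), then use Lemma~1 and the symmetry argument of Remark~3 to force the best approximant of the odd function $u$ into the odd part of $\mathbb{S}^\times_{B,n,m+1}$, which decomposition~\eqref{2.3} together with the evenness of $\Phi_{B,0}$ identifies as $\widetilde{\mathcal{S}}^0_{B,n,m}$. The only difference is that you spell out the bookkeeping (transfer of condition (3) to $-l$ via $|\gamma_l|^2$) that the paper leaves implicit.
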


\begin{proof}
Since every $u\in\widetilde{H}_0^r$ has zero mean and $B$ satisfies the conditions of the second proposition of Corollary~1, we can write
\begin{equation}
\label{3.3}
E\bigl(u,
\mathbb{S}^\times_{B,n,m+1}\bigr)_2\leqslant \frac{1}{(m+1)^r}\|u^{(r)}\|_2.
\end{equation}
By Lemma~1, the space $\mathbb{S}^\times_{B,n,m+1}$ contains each function $s$ with its odd and even parts. Thus, an element of best approximation of $u\in\widetilde{H}_0^r$ by the space $\mathbb{S}^\times_{B,n,m+1}$ is odd. This impies that the space $\mathbb{S}^\times_{B,n,m+1}$ on the left-hand side of~\eqref{3.3} can be substituted for its subspace consisting of odd functions. Since $\Phi_{B,0}$ is even (by condition~2), it follows from decomposition~\eqref{2.3} that the desired approximating subspace coincides with $\widetilde{\mathcal{S}}^0_{B,n,m}$.
\end{proof}

\begin{thm}
Let $r,n,m\in\mathbb{N}$, $m\leqslant n$, and suppose that the Fourier coefficients of a function $B\in L_2$ satisfy the following conditions.
\begin{enumerate}
	\item For any $l\in[1:m-1]$ there exists $\gamma_l\in\mathbb{C}\setminus\{0\}$ such that for all $k\in\mathbb{Z}$ $c_{-l-2nk}(B)=\gamma_lc_{l+2nk}(B)$.
	\item For all $l\in[0:m-1]$, $c_l(B)\ne0$.
	\item For all $\nu\in\mathbb{Z}\setminus\{0\}$, $c_{2n\nu}(B)=0$.
	\item For all $l\in[1:m-1]$, we have
	$$\sum\limits_{k\in\mathbb{Z}}\frac{|c_{l+2nk}(B)|^2}{\frac{1}{(l+2nk)^{2r}}-\frac{1}{m^{2r}}}\geqslant0.$$
\end{enumerate}
Then for any $u\in\widetilde{H}_1^r$,
\begin{equation}
\label{3.4}
	E\bigl(u,\widetilde{\mathcal{S}}^1_{B,n,m}\bigr)_2\leqslant \frac{1}{m^r}\|u^{(r)}\|_2.
\end{equation}
\end{thm}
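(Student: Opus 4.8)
The plan is to follow the scheme of the previous theorem, but now invoking Theorem~1 in full rather than its corollary: a function $u\in\widetilde{H}_1^r$ is even and may have nonzero mean, so we cannot discard the index $l=0$, and condition~(2) at $l=0$ (which gives $c_0(B)\neq0$) is exactly what lets Theorem~1 apply to the constants as well.

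First I would check that conditions (1)--(4) imply the second assertion of Theorem~1 over the full index range $|l|\in[0:m-1]$. Conditions (2), (3), and (4) directly furnish the nonvanishing of $c_l(B)$, the vanishing $c_{2n\nu}(B)=0$ for $\nu\neq0$, and the summation inequality for nonnegative indices. To extend the statements about $c_{-l}(B)$ and about the sum at index $-l$, I would use the symmetry relation of condition~(1): since $c_{-l}(B)=\gamma_lc_l(B)$ with $\gamma_l\neq0$ and $c_l(B)\neq0$, the coefficient $c_{-l}(B)$ is nonzero; and replacing $k$ by $-k$ in the sum for index $-l$, together with $|c_{-l-2nk}(B)|^2=|\gamma_l|^2|c_{l+2nk}(B)|^2$ and the evenness of the power $(l+2nk)^{2r}$, turns that sum into $|\gamma_l|^2$ times the sum for index $l$, so its sign is preserved. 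Hence Theorem~1 yields, for every $u\in\widetilde{H}_1^r$,
$$
E\bigl(u,\mathbb{S}^\times_{B,n,m}\bigr)_2\leqslant\frac{1}{m^r}\|u^{(r)}\|_2.
$$

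Next I would argue that the approximating space can be replaced by its even part. To apply Lemma~1 I would verify its second condition with $\gamma_0=1$: condition~(1) supplies the required relations for $l\in[1:m-1]$, while for $l=0$ conditions~(2) and~(3) force $c_0(B)\neq0$ and $c_{2nk}(B)=0$ for $k\neq0$, so $c_{-2nk}(B)=\gamma_0c_{2nk}(B)$ holds with $\gamma_0=1$. By Lemma~1 the space $\mathbb{S}^\times_{B,n,m}$ is then invariant under $s\mapsto s(-\cdot)$, and Remark~3 shows that an element of best approximation of the even function $u$ by this space is even.

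Finally, I would identify the even part of $\mathbb{S}^\times_{B,n,m}$. By the choice $\gamma_0=1$ and condition~(3), the function $\Phi_{B,0}\sim c_0(B)$ is constant, hence even, so decomposition~\eqref{2.3} shows that the even functions of $\mathbb{S}^\times_{B,n,m}$ are exactly those in $\mathrm{span}\{\Phi_{B,0}\}\oplus\mathrm{span}\{\Phi^e_{B,l}\}_{l=1}^{m-1}=\widetilde{\mathcal{S}}^1_{B,n,m}$. Combining this with the displayed inequality gives~\eqref{3.4}. I expect the only point requiring care to be the sign-preservation computation in the first step, where one must track correctly how condition~(1) interacts with the even exponents in the denominators; the reduction to the even subspace is then a direct transcription of the reflection argument already used for the class $\widetilde{H}_0^r$.
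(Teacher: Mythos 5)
Your proposal is correct and follows the same route as the paper: apply Theorem~1 directly to get the bound with $\mathbb{S}^\times_{B,n,m}$, then use Lemma~1 and the reflection argument of Remark~3 together with decomposition~\eqref{2.3} to replace that space by its even part $\widetilde{\mathcal{S}}^1_{B,n,m}$. You are merely more explicit than the paper in transferring conditions (2) and (4) to negative indices via condition (1) and in checking $\gamma_0=1$, which the paper leaves implicit.
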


\begin{proof}
Applying Theorem~1 to $u\in\widetilde{H}_1^r$, we can write
$$
E\bigl(u,\mathbb{S}^\times_{B,n,m}\bigr)_2\leqslant \frac{1}{m^r}\|u^{(r)}\|_2.
$$
Using the same argument as in the proof of Theorem~2, we conclude that the space $\mathbb{S}^\times_{B,n,m}$ on the left-hand side of the last inequality can be substituted for the subspace of even functions, i.e. for $\widetilde{\mathcal{S}}^1_{B,n,m}$. 	
\end{proof}

\begin{thm}
Let $r,n,m\in\mathbb{N}$, $2m+1\leqslant n$, and suppose that the Fourier coefficients of a function $B\in L_2$ satisfy the following conditions.
\begin{enumerate}
	\item For any $l\in[1:2m]$ there exists $\gamma_l\in\mathbb{C}\setminus\{0\}$ such that for all $k\in\mathbb{Z}$ $c_{-l-2nk}(B)=\gamma_lc_{l+2nk}(B)$.
	\item For all $\nu\in\mathbb{N}$, $c_{2n\nu}(B)=c_{-2n\nu}(B)$.
	\item For all $l\in[1:2m]$, we have $c_l(B)\ne0$ and
	$$\sum\limits_{k\in\mathbb{Z}}\frac{|c_{l+2nk}(B)|^2}{\frac{1}{(l+2nk)^{2r}}-\frac{1}{(2m+1)^{2r}}}\geqslant0.$$
\end{enumerate}
Then for any $u\in\widetilde{H}_2^r$,
\begin{equation}
\label{3.5}
	E\bigl(u,\widetilde{\mathcal{S}}^2_{B,n,m}\bigr)_2\leqslant \frac{1}{(2m+1)^r}\|u^{(r)}\|_2.
\end{equation}
\end{thm}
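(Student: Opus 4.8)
The plan is to reproduce, with the appropriate index shift, the two-step scheme already used for Theorems~2 and~3: first approximate $u$ from the large space $\mathbb{S}^\times_{B,n,2m+1}$ at the rate $(2m+1)^{-r}$ via Corollary~1, and then exploit the symmetries of $\widetilde{H}^r_2$ (through Remark~4) to show that the best approximant automatically collapses into the small subspace $\widetilde{\mathcal{S}}^2_{B,n,m}$.

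First I would note that every $u\in\widetilde{H}^r_2$ is odd, whence $c_0(u)=0$, so Corollary~1 is applicable. I apply it with $m$ replaced by $2m+1$, which is legitimate because $2m+1\leqslant n$. Its second assertion, ranging over $|l|\in[1:2m]$, is precisely condition~3 of the present theorem for $l\in[1:2m]$; for the negative indices it follows from condition~1 after the substitution $k\mapsto-k$, which turns the relevant sum for $-l$ into $|\gamma_l|^2$ times the sum for $l$ and gives $c_{-l}(B)=\gamma_lc_l(B)\ne0$. Corollary~1 then yields
$$
E\bigl(u,\mathbb{S}^\times_{B,n,2m+1}\bigr)_2\leqslant\frac{1}{(2m+1)^r}\|u^{(r)}\|_2.
$$

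Next comes the symmetry reduction. Conditions~1 and~2 supply the hypothesis of Lemma~1 (with $\gamma_0=1$) for the parameter $2m+1$, so $\mathbb{S}^\times_{B,n,2m+1}$ is invariant under $s\mapsto s(-\cdot)$; by Remark~3 it is also invariant under the shift by~$\pi$. Remark~4 then guarantees that the unique best approximant $s^*$ of $u$ inherits the symmetries of $u$: it is odd and $s^*(\cdot+\pi/2)$ is even. Using the decomposition~\eqref{2.3} with $m$ replaced by~$2m+1$, together with the fact that $\Phi_{B,0}$ is even (by condition~2), the odd functions in $\mathbb{S}^\times_{B,n,2m+1}$ are exactly $\mathrm{span}\,\{\Phi^o_{B,l}\}_{l=1}^{2m}$. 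To encode the second symmetry I rewrite "$s(\cdot+\pi/2)$ even" as $s(\pi-x)=s(x)$; by the identity $\Phi^o_{B,l}(\pi-x)=(-1)^{l+1}\Phi^o_{B,l}(x)$ from Remark~3, this holds precisely when $l$ is odd. Hence $s^*\in\mathrm{span}\,\{\Phi^o_{B,2l-1}\}_{l=1}^{m}=\widetilde{\mathcal{S}}^2_{B,n,m}$, and since this is a subspace of $\mathbb{S}^\times_{B,n,2m+1}$ we obtain $E\bigl(u,\widetilde{\mathcal{S}}^2_{B,n,m}\bigr)_2=\|u-s^*\|_2\leqslant(2m+1)^{-r}\|u^{(r)}\|_2$, as claimed.

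The only genuinely new point, and the step I expect to demand the most care, is the bookkeeping of the two symmetries on the exponential-spline basis: translating the odd/$\pi/2$-even conditions on $u$ into a parity condition on the index~$l$ via Remark~3, and verifying that this selects exactly the $m$ functions $\Phi^o_{B,2l-1}$ spanning $\widetilde{\mathcal{S}}^2_{B,n,m}$, while $\Phi_{B,0}$ and every even-indexed $\Phi^o_{B,l}$ are discarded. Everything else is a direct transcription of the arguments already carried out for $\widetilde{H}^r_0$ and $\widetilde{H}^r_1$.
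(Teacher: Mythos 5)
Your proof is correct and takes essentially the same route as the paper: the paper simply packages your first symmetry reduction (oddness, via Corollary~1 applied with $2m+1$ in place of $m$) as an invocation of Theorem~2 with $2m$ in place of $m$, and then imposes the second symmetry $u=u(\pi-\cdot)$ on $\widetilde{\mathcal{S}}^0_{B,n,2m}$ exactly as you do, using the identity $\Phi^o_{B,l}(\pi-x)=(-1)^{l+1}\Phi^o_{B,l}(x)$. The only blemishes are citation slips (the shift-by-$\pi$ invariance and that reflection identity are the paper's Remark~2, and the symmetry of the best approximant is Remark~3), which do not affect the argument.
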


\begin{proof}
Because $\widetilde{H}_2^r$ is a subspace of $\widetilde{H}_0^r$, by Theorem~2, for every $u\in\widetilde{H}_2^r$, we have
$$
E\bigl(u,\widetilde{\mathcal{S}}^0_{B,n,2m}\bigr)_2\leqslant \frac{1}{(2m+1)^r}\|u^{(r)}\|_2.
$$
Since $u$ satisfies the equality $u=u(\pi-\cdot)$, we can restrict the space $\widetilde{\mathcal{S}}^0_{B,n,2m}$ to a subspace of functions possessing this property. By Remark~2, we obtain that the desired subspace coincides with $\widetilde{\mathcal{S}}^2_{B,n,m}$.
\end{proof}

\begin{rem}
It follows easily that under the assumptions of Theorem~4 the space $\mathrm{span}\,\{\Phi^e_{B,2l-1}\}_{l=1}^m$ is extremal for the set defined by interchanging the roles of $k$ and $l$ in $H^r_2$ (or, equivalently, the symmetry conditions in $\widetilde{H}_2^r$).
\end{rem}

\begin{rem}
Note that the conditions of Theorems~2--4 are invariant under the shift of $B$ by $\frac{\pi}{2n}$.
\end{rem}

\begin{rem}
Inequalities~\eqref{3.2}, \eqref{3.4} and \eqref{3.5} turn into equalities for the functions $x\mapsto\sin(m+1)x$, $x\mapsto\cos mx$ and $x\mapsto\sin(2m+1)x$, respectively.
\end{rem}

The estimates from Theorems 2--4 can be strengthened in a standard way by replacing their right-hand sides with best approximations.

\begin{col}
Let $B\in W^{(r)}_2$ under the assumptions of Theorem~2.
\begin{enumerate}
	\item If $r$ is even, then for any $u\in\widetilde{H}_0^r$,
	$$E\bigl(u,\widetilde{\mathcal{S}}^0_{B,n,m}\bigr)_2\leqslant \frac{1}{(m+1)^r}E\bigl(u^{(r)},\widetilde{\mathcal{S}}^0_{B^{(r)},n,m}\bigr)_2.$$
	\item If $r$ is odd, then for any $u\in\widetilde{H}_0^r$,
	$$E\bigl(u,\widetilde{\mathcal{S}}^0_{B,n,m}\bigr)_2\leqslant \frac{1}{(m+1)^r}E\bigl(u^{(r)},\mathrm{span}\,\{\Phi^e_{B^{(r)},l}\}_{l=1}^m\bigr)_2.$$
\end{enumerate}
\end{col}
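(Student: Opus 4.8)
The plan is to run the standard ``lifting'' argument that converts a Jackson-type bound against $\|u^{(r)}\|_2$ into one against a best approximation of $u^{(r)}$: since the class $\widetilde{H}_0^r$ is stable under subtracting elements of the approximating space, and $r$-fold differentiation carries $\widetilde{\mathcal{S}}^0_{B,n,m}$ exactly onto the space occurring on the right-hand side, the estimate of Theorem~2 transfers term by term.

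First I would record the precise effect of differentiation on the functions $\Phi_{B,l}$. Because $B\in W^{(r)}_2$, we have $c_k(B^{(r)})=(ik)^r c_k(B)$, and differentiating the Fourier expansion $\Phi_{B,l}\sim\sum_{\nu}c_{l+2n\nu}(B)e^{i(l+2n\nu)x}$ term by term yields the identity
$$
(\Phi_{B,l})^{(r)}=\Phi_{B^{(r)},l}.
$$
I would then combine this with the elementary fact that differentiation commutes with the even/odd splitting up to the sign $(-1)^r$: for any $f$ one has $(f^o)^{(r)}=(f^{(r)})^o$ if $r$ is even and $(f^o)^{(r)}=(f^{(r)})^e$ if $r$ is odd. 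Hence
$$
(\Phi^o_{B,l})^{(r)}=\Phi^o_{B^{(r)},l}\ (r\text{ even}),\qquad
(\Phi^o_{B,l})^{(r)}=\Phi^e_{B^{(r)},l}\ (r\text{ odd}).
$$
Consequently $r$-fold differentiation maps $\widetilde{\mathcal{S}}^0_{B,n,m}=\mathrm{span}\,\{\Phi^o_{B,l}\}_{l=1}^m$ onto $\widetilde{\mathcal{S}}^0_{B^{(r)},n,m}$ when $r$ is even, and onto $\mathrm{span}\,\{\Phi^e_{B^{(r)},l}\}_{l=1}^m$ when $r$ is odd, that is, onto exactly the spaces named in the two assertions.

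Finally I would fix $u\in\widetilde{H}_0^r$ and let $g$ be an arbitrary element of the relevant target space. By the surjectivity just established (mapping generators to generators), there is $h\in\widetilde{\mathcal{S}}^0_{B,n,m}$ with $h^{(r)}=g$; since $h$ is odd, $u-h\in\widetilde{H}_0^r$. Applying Theorem~2 to $u-h$ and using that subtracting an element of the approximating space leaves the best approximation unchanged, namely $E\bigl(u-h,\widetilde{\mathcal{S}}^0_{B,n,m}\bigr)_2=E\bigl(u,\widetilde{\mathcal{S}}^0_{B,n,m}\bigr)_2$, gives
$$
E\bigl(u,\widetilde{\mathcal{S}}^0_{B,n,m}\bigr)_2\leqslant\frac{1}{(m+1)^r}\bigl\|u^{(r)}-g\bigr\|_2 .
$$
Taking the infimum over all admissible $g$ produces both claimed inequalities.

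I expect the only genuinely delicate point to be the parity bookkeeping together with the justification that $B\in W^{(r)}_2$ legitimately yields $\Phi_{B,l}\in W^{(r)}_2$ with $(\Phi_{B,l})^{(r)}=\Phi_{B^{(r)},l}$ (i.e.\ that term-by-term differentiation of the Fourier series is valid here); once that identity and the even/odd rule are in place, the remainder is the routine reduction to Theorem~2.
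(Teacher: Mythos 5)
Your proposal is correct and follows essentially the same route as the paper: the key identity $(\Phi^o_{B,l})^{(r)}=\Phi^o_{B^{(r)},l}$ (for $r$ even) or $\Phi^e_{B^{(r)},l}$ (for $r$ odd), the observation that the approximant can therefore be realized as $h^{(r)}$ with $h$ odd in $\widetilde{\mathcal{S}}^0_{B,n,m}$, and an application of Theorem~2 to $u-h$. The paper phrases this by taking $s$ to be the best approximant of $u^{(r)}$ and passing to its periodic $r$th primitive (legitimate since $c_0(s)=0$), whereas you take an arbitrary $g$ and an infimum at the end; these are the same argument.
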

\begin{proof}
If $r$ is even, denote by $s$ an element of best approximation of the function $u^{(r)}$ by the space $\widetilde{\mathcal{S}}^0_{B^{(r)},n,m}$. For odd $r$, let $s$ be an element of best approximation of $u^{(r)}$ by the space $\mathrm{span}\,\{\Phi^e_{B^{(r)},l}\}_{l=1}^m$. Since $c_0(s)=0$, the function $s$ has $2\pi$-periodic $r$th primitive, which we denote by $s_r$. For any $l\in[1:m]$,
$$
\left(\Phi_{B,l}^o\right)^{(r)}=\begin{cases}
\Phi_{B^{(r)},l}^o,&\quad r\text{ is even},\cr
\Phi_{B^{(r)},l}^e,&\quad r\text{ is odd}
\end{cases} 
$$ 
and hence $s_r\in\widetilde{\mathcal{S}}^0_{B,n,m}$. Applying Theorem~2 to the function $u-s_r$, we obtain
\begin{gather*}
E\bigl(u,\widetilde{\mathcal{S}}^0_{B,n,m}\bigr)_2=E\bigl(u-s_r,\widetilde{\mathcal{S}}^0_{B,n,m}\bigr)_2\leqslant\frac{1}{(m+1)^r}\|u^{(r)}-s\|_2=\\
=\begin{cases}
\frac{1}{(m+1)^r}E\bigl(u^{(r)},\widetilde{\mathcal{S}}^0_{B^{(r)},n,m}\bigr)_2,&\quad r\text{ is even},\cr
\frac{1}{(m+1)^r}E\bigl(u^{(r)},\mathrm{span}\,\{\Phi^e_{B^{(r)},l}\}_{l=1}^m\bigr)_2,&\quad r\text{ is odd.}
\end{cases} 
\end{gather*}
\end{proof}

The proof of the two following statements goes exactly the same way as in Corollary~2 and therefore is omitted.

\begin{col}
Let $B\in W^{(r)}_2$ under the assumptions of Theorem~3.
\begin{enumerate}
	\item If $r$ is even, then for any $u\in\widetilde{H}_1^r$,
	$$E\bigl(u,\widetilde{\mathcal{S}}^1_{B,n,m}\bigr)_2\leqslant \frac{1}{m^r}E\bigl(u^{(r)},\mathrm{span}\,\{\Phi^e_{B^{(r)},l}\}_{l=1}^{m-1}\bigr)_2.$$
	\item If $r$ is odd, then for any $u\in\widetilde{H}_1^r$,
	$$E\bigl(u,\widetilde{\mathcal{S}}^1_{B,n,m}\bigr)_2\leqslant \frac{1}{m^r}E\bigl(u^{(r)},\widetilde{\mathcal{S}}^0_{B^{(r)},n,m-1})_2.$$
\end{enumerate}
\end{col}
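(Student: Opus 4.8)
The plan is to follow the proof of Corollary~2 almost verbatim, with the roles of even and odd parts interchanged. First I would note that if $u\in\widetilde{H}_1^r$, then $u^{(r)}$ is even when $r$ is even and odd when $r$ is odd; this is exactly why the right-hand side involves the space of even functions $\mathrm{span}\,\{\Phi^e_{B^{(r)},l}\}_{l=1}^{m-1}$ in the first case and the space of odd functions $\widetilde{\mathcal{S}}^0_{B^{(r)},n,m-1}=\mathrm{span}\,\{\Phi^o_{B^{(r)},l}\}_{l=1}^{m-1}$ in the second. Accordingly, I let $s$ be an element of best approximation of $u^{(r)}$ by the indicated space; it is well defined because $c_l(B^{(r)})=(il)^rc_l(B)\ne0$ for $l\in[1:m-1]$ guarantees that the functions $\Phi_{B^{(r)},l}$ are nonzero.

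The computational core is the differentiation rule for even parts, mirroring the one for odd parts used in Corollary~2. Differentiating the Fourier series of $\Phi_{B,l}$ termwise and using $c_k(B^{(r)})=(ik)^rc_k(B)$ gives $\bigl(\Phi_{B,l}\bigr)^{(r)}=\Phi_{B^{(r)},l}$. Taking even parts and keeping track of the factor $(-1)^r$ produced by differentiating $\Phi_{B,l}(-\cdot)$, I obtain
$$
\bigl(\Phi^e_{B,l}\bigr)^{(r)}=\begin{cases}
\Phi^e_{B^{(r)},l},&\quad r\text{ is even},\cr
\Phi^o_{B^{(r)},l},&\quad r\text{ is odd}.
\end{cases}
$$
Thus the $r$-th derivative of the even function $\Phi^e_{B,l}$ has precisely the parity of $u^{(r)}$, matching the target space in each case.

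With this identity the rest is routine. Since $s$ lies in a span of functions supported on the exponents $\pm l+2n\mathbb{Z}$ with $l\in[1:m-1]$ and $m\leqslant n$, the exponent $0$ never occurs, so $c_0(s)=0$ and $s$ admits a $2\pi$-periodic $r$-th primitive $s_r$. Expanding $s$ in the basis $\{\Phi^e_{B^{(r)},l}\}$ (for even $r$) or $\{\Phi^o_{B^{(r)},l}\}$ (for odd $r$) and replacing each $\Phi_{B^{(r)},l}$ by $\Phi_{B,l}$, the displayed identity shows that the resulting combination of $\Phi^e_{B,l}$ is exactly $s_r$, so $s_r\in\mathrm{span}\,\{\Phi^e_{B,l}\}_{l=1}^{m-1}\subseteq\widetilde{\mathcal{S}}^1_{B,n,m}$ and $s_r^{(r)}=s$. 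Applying Theorem~3 to $u-s_r$ and using that subtracting an element of $\widetilde{\mathcal{S}}^1_{B,n,m}$ does not change the best approximation, I conclude
$$
E\bigl(u,\widetilde{\mathcal{S}}^1_{B,n,m}\bigr)_2=E\bigl(u-s_r,\widetilde{\mathcal{S}}^1_{B,n,m}\bigr)_2\leqslant\frac{1}{m^r}\bigl\|u^{(r)}-s\bigr\|_2,
$$
and by the choice of $s$ the last norm equals the asserted best approximation of $u^{(r)}$.

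I do not expect a serious obstacle, consistently with the paper's remark that the argument is the same as in Corollary~2. The only delicate points are the parity bookkeeping in the differentiation identity — the sign $(-1)^r$ that swaps even and odd parts — and the verification that the primitive $s_r$ lands in the even block $\mathrm{span}\,\{\Phi^e_{B,l}\}_{l=1}^{m-1}$ without touching the constant component $\mathrm{span}\,\{\Phi_{B,0}\}$; both become immediate once $c_0(s)=0$ is established.
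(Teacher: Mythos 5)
Your proposal is correct and follows essentially the same route as the paper, which explicitly omits this proof because it ``goes exactly the same way as in Corollary~2'': you choose the best approximant $s$ of $u^{(r)}$ in the appropriate parity block, use the identity $\bigl(\Phi^e_{B,l}\bigr)^{(r)}=\Phi^e_{B^{(r)},l}$ or $\Phi^o_{B^{(r)},l}$ according to the parity of $r$, verify $c_0(s)=0$ so that the periodic primitive $s_r$ exists and lies in $\widetilde{\mathcal{S}}^1_{B,n,m}$, and apply Theorem~3 to $u-s_r$. The parity bookkeeping is handled correctly throughout.
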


\begin{col}
Let $B\in W^{(r)}_2$ under the assumptions of Theorem~4.
\begin{enumerate}
	\item If $r$ is even, then for any $u\in\widetilde{H}_2^r$,
	$$E\bigl(u,\widetilde{\mathcal{S}}^2_{B,n,m}\bigr)_2\leqslant \frac{1}{(2m+1)^r}E\bigl(u^{(r)},\widetilde{\mathcal{S}}^2_{B^{(r)},n,m}\bigr)_2.$$
	\item If $r$ is odd, then for any $u\in\widetilde{H}_2^r$,
	$$E\bigl(u,\widetilde{\mathcal{S}}^2_{B,n,m}\bigr)_2\leqslant \frac{1}{(2m+1)^r}E\bigl(u^{(r)},\mathrm{span}\,\{\Phi^e_{B^{(r)},2l-1}\}_{l=1}^m\bigr)_2.$$
\end{enumerate}
\end{col}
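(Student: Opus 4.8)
The plan is to reproduce the mechanism of Corollary~2, now adapted to the odd frequencies $2l-1$ that index the space $\widetilde{\mathcal{S}}^2_{B,n,m}=\mathrm{span}\,\{\Phi^o_{B,2l-1}\}_{l=1}^m$. First I would let $s$ denote an element of best approximation of $u^{(r)}$ by the space appearing on the right-hand side of the desired inequality: namely $\widetilde{\mathcal{S}}^2_{B^{(r)},n,m}=\mathrm{span}\,\{\Phi^o_{B^{(r)},2l-1}\}_{l=1}^m$ when $r$ is even, and $\mathrm{span}\,\{\Phi^e_{B^{(r)},2l-1}\}_{l=1}^m$ when $r$ is odd. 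The goal is to exhibit a function $s_r\in\widetilde{\mathcal{S}}^2_{B,n,m}$ whose $r$th derivative is $s$; applying Theorem~4 to $u-s_r$ then immediately yields the bound.

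The technical heart is the same derivative identity used in Corollary~2, specialized to odd indices. Since $\Phi_{B,l}^{(r)}=\Phi_{B^{(r)},l}$ at the level of Fourier coefficients, and differentiation interchanges even and odd parts according to the parity of $r$, I obtain
$$
\bigl(\Phi_{B,2l-1}^o\bigr)^{(r)}=\begin{cases}\Phi_{B^{(r)},2l-1}^o,&r\text{ even},\\ \Phi_{B^{(r)},2l-1}^e,&r\text{ odd},\end{cases}
$$
which is exactly the identity from the proof of Corollary~2 with $l$ replaced by the odd number $2l-1$. Because every index $2l-1$ is nonzero, the function $s$ has no constant Fourier term, i.e.\ $c_0(s)=0$, so its $r$th periodic primitive $s_r$ (defined by $c_k(s_r)=c_k(s)/(ik)^r$ for $k\ne0$ and $c_0(s_r)=0$) exists and is uniquely determined. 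Matching Fourier coefficients through the displayed identity shows that $s_r$ is precisely the corresponding combination of the $\Phi^o_{B,2l-1}$, hence $s_r\in\widetilde{\mathcal{S}}^2_{B,n,m}$.

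It remains to invoke Theorem~4. Since $s_r\in\widetilde{\mathcal{S}}^2_{B,n,m}$ it is odd and satisfies $s_r=s_r(\pi-\cdot)$ (by Remark~2, because the sign is $(-1)^{2l-1+1}=1$), so $u-s_r$ still belongs to $\widetilde{H}_2^r$, and subtracting $s_r$ does not change the distance to the subspace. Theorem~4 then gives
$$
E\bigl(u,\widetilde{\mathcal{S}}^2_{B,n,m}\bigr)_2=E\bigl(u-s_r,\widetilde{\mathcal{S}}^2_{B,n,m}\bigr)_2\leqslant\frac{1}{(2m+1)^r}\bigl\|(u-s_r)^{(r)}\bigr\|_2=\frac{1}{(2m+1)^r}\|u^{(r)}-s\|_2,
$$
and recognizing $\|u^{(r)}-s\|_2$ as the best approximation of $u^{(r)}$ by the chosen space finishes both cases. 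I do not anticipate a genuine obstacle here: the argument is structurally identical to Corollary~2, and the only point requiring care is the bookkeeping of parities — checking that the odd frequencies $2l-1$ are preserved by integration and that the $(\pi-\cdot)$-symmetry survives, both of which follow from Remark~2 and the sign $(-1)^{2l-1}$.
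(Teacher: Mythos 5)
Your proof is correct and is essentially the paper's own argument: the authors state that Corollaries 3 and 4 are proved ``exactly the same way as in Corollary~2,'' i.e.\ by taking the best approximant $s$ of $u^{(r)}$, forming its periodic $r$th primitive $s_r$ via the derivative identity for $\Phi^o_{B,2l-1}$, and applying Theorem~4 to $u-s_r$. Your extra check that $s_r$ inherits the $(\pi-\cdot)$-symmetry (so that $u-s_r\in\widetilde{H}_2^r$) via the sign $(-1)^{2l}$ from Remark~2 is exactly the bookkeeping the omitted proof relies on.
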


\section{Examples}

In~\cite[Theorem 2]{we}, we specified an easily verifiable condition that is sufficient for the fulfilment of inequality~\eqref{3.1} (and, therefore, of the corresponding conditions of Theorems~2--4). Namely, inequality~\eqref{3.1} holds for all functions $B$ possessing the property
$$
|l+2nk|^r|c_{l+2nk}(B)|\leqslant|l|^r|c_l(B)|\quad\text{for all } |l|\in[1:m-1],\, k\in\mathbb{Z}.
$$
Among examples of functions $B$ satisfying this condition for all $m\leqslant n$ are the functions with coefficients of the form
$$
c_k(B)=\left(\frac{e^{i\frac{\pi}{n}k}-1}{i\frac{\pi}{n}k}\right)^{\mu+1}\!\eta_k, \quad
\mu\in\mathbb{Z}_+,\,\mu+1\geqslant r,
$$
where $|\eta_{l+2nk}|\leqslant|\eta_{l}|$ and $\eta_l\ne0$ for $|l|<n$. For $\eta_k=1$ we get the $B$-spline.
If $\eta_k$ are the Fourier coefficients of the function $K\in L_1$, the function $B$ is the Steklov average of order $\mu+1$ of $K$. For example, $K$ can be the Poisson kernel ($\eta_k=e^{-\alpha|k|}$, $\alpha>0$), the heat kernel ($\eta_k=e^{-\alpha k^2}$, $\alpha>0$), the kernels of some differential operators ($\eta_k=\frac{1}{P(ik)}$, where $P$ is a polynomial with only real roots), and the generalized Bernoulli kernel ($\eta_k=|k|^{-s}e^{-i\beta\mathop{\mathrm{sign}}{k}}$, $s>0$, $\beta\in\mathbb{R}$); in the latter two examples, $\eta_0$ is assumed to equal $1$.

Taking the Dirichlet kernels of appropriate order as a function $B$ in Theorems 2--4,
we get the optimal subspaces of trigonometric polynomials~\eqref{1.1}.

Now we describe spline spaces arising from Theorems 2--4 and show that the results of~\cite{floater}
follow from these theorems.

Recall that for a given space of periodic functions with appropriate symmetry conditions, we denote the space of their restrictions to 
$[\,0,\pi]$ or to $[\,0,\pi/2]$ by the same letter but without tilde.

1. Replace $n$ with $n+1$ in Theorem~2 and take $m=n$, $B=B_{n+1,d}$.
Then our space  $\widetilde{\mathcal{S}}^{0}_{B,n+1,n}$ is the $n$-dimensional space of odd splines from $\mathbb{S}^{\times}_{B,n+1}$.
Consider the space $Q_{d,1}$ of splines $s$ which have knots $\left\{\frac{k\pi}{n+1}\right\}_{k=1}^{n}$
and satisfy the boundary conditions
$$s^{(k)}(0)=s^{(k)}(\pi)=0,\quad 0\leqslant k< d,\quad k\text{ even}.$$
Its dimension equals $n$ for $d$ odd and equals $n+1$ for $d$ even.
So, for $d$ odd we have $\mathcal{S}^{0}_{B,n+1,n}=Q_{d,1}=S_{d,0}$.
For $d$ even, $\mathcal{S}^{0}_{B,n+1,n}$ is an $n$-dimensional subspace of~$Q_{d,1}$.

2. Replace $n$ with $n+1$ in Theorem~2 and take $m=n$, $B=B_{n+1,d}\bigl(\cdot-\frac{\pi}{2(n+1)}\bigr)$.
Then our space  $\widetilde{\mathcal{S}}^{0}_{B,n+1,n}$ is the $n$-dimensional space of odd splines from
$\mathbb{S}^{\times}_{B,n+1}$.
Consider the space $Q_{d,2}$ of splines $s$ which have
knots $\left\{\frac{k\pi}{n+1}+\frac{\pi}{2(n+1)}\right\}_{k=0}^{n}$ and satisfy the boundary conditions
$$s^{(k)}(0)=s^{(k)}(\pi)=0,\quad 0\leqslant k\leqslant d,\quad k\text{ even}.$$
Its dimension equals $n$ for $d$ even and equals $n+1$ for $d$ odd.
Note that $0$ and $\pi$ are not the knots.
So, for $d$ even we have $\mathcal{S}^{0}_{B,n+1,n}=Q_{d,2}=S_{d,0}$.
For $d$ odd, $\mathcal{S}^{0}_{B,n+1,n}$ is an $n$-dimensional subspace of~$Q_{d,2}$.

3. Take $m=n$, $B=B_{n,d}$ in Theorem~3.
Then our space  $\widetilde{\mathcal{S}}^{1}_{B,n,n}$ is the $n$-dimensional space of even splines from $\mathbb{S}^{\times}_{B,n}$.
Consider the space $Q_{d,3}$ of splines $s$ which have knots $\left\{\frac{k\pi}{n}\right\}_{k=1}^{n-1}$
and satisfy the boundary conditions
$$s^{(k)}(0)=s^{(k)}(\pi)=0,\quad 0\leqslant k< d,\quad k\text{ odd}.$$
Its dimension equals $n$ for $d$ even and equals $n+1$ for $d$ odd.
So, for $d$ even we have $\mathcal{S}^{1}_{B,n,n}=Q_{d,3}=S_{d,1}$.
For $d$ odd, $\mathcal{S}^{1}_{B,n,n}$ is an $n$-dimensional subspace of~$Q_{d,3}$.

4. Take $m=n$, $B=B_{n,d}\bigl(\cdot-\frac{\pi}{2n}\bigr)$ in Theorem~3.
Then our space $\widetilde{\mathcal{S}}^{1}_{B,n,n}$ is the $n$-dimensional space of even splines from
$\mathbb{S}^{\times}_{B,n}$.
Consider the space $Q_{d,4}$ of splines $s$ which have
knots $\left\{\frac{k\pi}{n}+\frac{\pi}{2n}\right\}_{k=0}^{n-1}$ and satisfy the boundary conditions
$$s^{(k)}(0)=s^{(k)}(\pi)=0,\quad 0\leqslant k\leqslant d,\quad k\text{ odd}.$$
Its dimension equals $n$ for $d$ odd and equals $n+1$ for $d$ even.
Note that $0$ and $\pi$ are not the knots.
So, for $d$ odd we have $\mathcal{S}^{1}_{B,n,n}=Q_{d,4}=S_{d,1}$.
For $d$ even, $\mathcal{S}^{1}_{B,n,n}$ is an $n$-dimensional subspace of~$Q_{d,4}$.

5. Replace $n$ with $2n+1$ in Theorem~4 and take $m=n$, $B=B_{2n+1,d}$.
Consider the space $Q_{d,5}$ of splines $s$ which have knots $\left\{\frac{k\pi}{2n+1}\right\}_{k=1}^{n}$
and satisfy the boundary conditions
$$s^{(k)}(0)=s^{(l)}\left(\frac{\pi}{2}\right)=0,\quad 0\leqslant k< d,\ 0\leqslant l\leqslant d,\quad k\text{ even, }l\text{ odd}.$$
Its dimension equals $n$ for $d$ odd and equals $n+1$ for $d$ even.
So, for $d$ odd we have $\mathcal{S}^{2}_{B,2n+1,n}=Q_{d,5}=S_{d,2}$.
For $d$ even, $\mathcal{S}^{2}_{B,2n+1,n}$ is an $n$-dimensional subspace of~$Q_{d,5}$.

6. Replace $n$ with $2n+1$ in Theorem~4 and take $m=n$, $B=B_{2n+1,d}\bigl(\cdot-\frac{\pi}{2(2n+1)}\bigr)$.
Consider the space $Q_{d,6}$ of splines $s$ which have
knots $\left\{\frac{k\pi}{2n+1}+\frac{\pi}{2(2n+1)}\right\}_{k=0}^{n-1}$ and satisfy the boundary conditions
$$s^{(k)}(0)=s^{(l)}\left(\frac{\pi}{2}\right)=0,\quad 0\leqslant k\leqslant d,\ 0\leqslant l<d,\quad k\text{ even, }l\text{ odd}.$$
Its dimension equals $n$ for $d$ even and equals $n+1$ for $d$ odd.
So, for $d$ even we have $\mathcal{S}^{2}_{B,2n+1,n}=Q_{d,6}=S_{d,2}$.
For $d$ odd, $\mathcal{S}^{2}_{B,2n+1,n}$ is an $n$-dimensional subspace of~$Q_{d,6}$.

Taking other values of $m$ and $n$ and taking the $B$-spline (shifted or not) as a function $B$ in Theorems 2--4
we get new families of optimal spline subspaces with equidistant knots.

\end{document}